\theoremstyle{prop}
\theoremstyle{proof}
\newtheorem{prop}{Proposition}
\providecommand{\keywords}[1]{
\textbf{Keywords:~~~} VAR process, Gordon Growth Model.
}
\begin{document}
\title{Gordon Growth Model with Vector Autoregressive Process}
\author{Battulga Gankhuu\footnote{Department of Applied Mathematics, National University of Mongolia; E-mail: battulga.g@seas.num.edu.mn; Phone Number: 976--99246036}}
\date{}

\maketitle 

\begin{abstract}
In this study, we introduce a Gordon's dividend discount model, based on Vector Autoregressive Process (VAR). We provide two Propositions, which are related to generic Gordon growth model and Gordon growth model, which is based on the VAR process.
\end{abstract}


\section{Introduction}

Dividend discount models (DDMs), first introduced by \citeA{Williams38}, are a popular tool for stock valuation. After \citeA{Williams38} initial model, \citeA{Gordon56} introduced a more sophisticated one, in what dividends change by a defined (deterministic) growth rate. If we assume that a firm will not default in the future, then the basic idea of all DDMs is that the market price of a stock equals the sum of the stock's next period price and dividend discounted at a risk--adjusted rate, known as a required rate of return, see, e.g., \citeA{Brealey20}. By their very nature, DDM approaches are best applicable to companies paying regular cash dividends. For a DDM with default risk, we refer to \citeA{Battulga22a}. As the outcome of DDMs depends crucially on dividend forecasts, most research in the last few decades has been around the proper estimations of dividend development. To obtain higher order moments of stock price, \citeA{Battulga22a} used non homogeneous Poisson process for dividends. By using the DDM and regime--switching process, \citeA{Battulga22b} obtained pricing and hedging formulas for European options and equity--linked life insurance products. To model required rate of return on stock, \citeA{Battulga23b} applied a three--regime model. The result of the paper reveals that the regime--switching model is good fit for the required rate of return. Reviews of some existing deterministic and stochastic DDMs, which model future dividends can be found in \citeA{dAmico20a} and \citeA{Battulga22a}. 

The rest of the paper is organized as follows: In Section 2, for a DDM, whose dividend is modeled by the popular Gordon growth model, we obtain its theoretical price formulas. Also, we provide a Proposition, which deals with two representations of theoretical price of a stock. Section 3 is dedicated to the Gordon growth model, which is based on the VAR process. In this Section, we obtain provide existence conditions of stock price and its second--order moment. Finally, Section 4 concludes the study.

\section{Gordon Growth Model}

In this paper, we assume that there are $m$ companies and the companies will not default in the future. As mentioned before the basic idea of all DDMs is that the market price of a stock equals the sum of the stock's next period price and dividend discounted at the required rate of return. Therefore, for successive prices of $i$--th company, the following relation holds 
\begin{equation}\label{08101}
P_{i,t}=(1+k_{i,t})P_{i,t-1}-d_{i,t},~~~i=1,\dots,m,~t=1,2,\dots,
\end{equation}
where $k_{i,t}$ is the required rate of return on stock, $P_{i,t}$ is the stock price, and $d_{i,t}$ is the dividend, respectively, at time $t$ of $i$--th company. In vector form, the above equation is written by
\begin{equation}\label{08102}
P_t=(i_m+k_t)\odot P_{t-1}-d_t,~~~t=1,2,\dots,
\end{equation}
where $\odot$ is the Hadamard's element--wise product of two vectors, $i_m=(1,\dots,1)'$ is an $(m\times 1)$ vector, consisting of 1, $k_t:=(k_{1,t},\dots,k_{m,t})'$ is an $(m\times 1)$ vector of the required rate of returns on stocks at time $t$, $P_t:=(P_{1,t},\dots,P_{m,t})'$ is an $(m\times 1)$ price vector at time $t$, and $d_t:=(d_{1,t},\dots,d_{m,t})'$ is an $(m\times 1)$ dividend vector at time $t$ of the companies. On the other hand, according to the Gordon growth model, it is the well--known fact that successive dividends of companies are modeled by
\begin{equation}\label{08103}
d_t=(i_m+g_t)\odot d_{t-1},~~~t=1,2,\dots,
\end{equation}
where $g_t=(g_{1,t},\dots,g_{m,t})'$ is an $(m\times 1)$ vector of dividend growth rates at time $t$ of the companies. In the paper, we assume that for $i=1,\dots,m$ and $t=1,2,\dots,$ $k_{i,t}>-1$ and $g_{i,t}>-1$. It follows from equations \eqref{08102} and \eqref{08103} that for $t=1,2,\dots$, vectors of the required rate of returns and dividends of the companies are obtained by
\begin{equation}\label{08104}
k_t=(P_t+d_t)\oslash P_{t-1}-i_m
\end{equation}
and
\begin{equation}\label{08105}
g_t=d_t\oslash d_{t-1}-i_m,
\end{equation}
respectively, where $\oslash$ is the element--wise division of two vectors, 

By repeating equations \eqref{08102} and \eqref{08103}, for $t=0,1,\dots$ and $r=0,1,\dots,$ we obtain two version of the price vector at time $t+r$ of the companies
\begin{equation}\label{08106}
P_{t+r}=\sum_{q=1}^\infty \bigg(\prod_{j=r+1}^{r+q}\big(i_m+g_{t+j}\big)\oslash\big(i_m+k_{t+j}\big)\bigg)\odot\prod_{j=1}^r\big(i_m+g_{t+j}\big)\odot d_t
\end{equation}
and
\begin{equation}\label{08107}
P_{t+r}=\prod_{j=1}^r\big(i_m+k_{t+j}\big)\odot P_t-\sum_{q=1}^r\bigg(\prod_{j=q+1}^r\big(i_m+k_{t+j}\big)\odot \prod_{j=1}^q\big(i_m+g_{t+j}\big)\bigg)\odot d_t,
\end{equation}
where for $q=1,2,\dots$ and generic ($m\times 1$) vectors $o_1,\dots,o_q$, $\prod_{j=1}^qo_j=o_1\odot\dots\odot o_q$ is an element--wise product of the vectors $o_1,\dots,o_q$ and with convention $\sum_{j=q}^{q-1} o_j=0$ and $\prod_{j=q}^{q-1} o_j=i_m$. Consequently, we have two equivalent equations at forecast origin $t$ for future price vector at time $t+r$. The first one depends on infinite number of the future dividend growth rates and required rate of returns and dividend vector at time $t$, while second one depends on finite number of the future dividend growth rates and required rate of returns and dividend vector and price vector at time $t$.

Also, one may write the above DDM equation in the following form
\begin{equation}\label{08108}
P_{i,t}=\exp\{\tilde{k}_{i,t}\}P_{i,t-1}-d_{i,t},~~~i=1,\dots,m,~t=1,2,\dots,
\end{equation}
where $\tilde{k}_{i,t}:=\ln(1+k_{i,t})$ is a log required rate of return on stock at time $t$ of $i$--th company. Equation \eqref{08101} can be written in vector form
\begin{equation}\label{08109}
P_t=\exp\{\tilde{k}_t\}\odot P_{t-1}-d_t,~~~t=1,2,\dots,
\end{equation}
where $\tilde{k}_t:=(\tilde{k}_{1,t},\dots,\tilde{k}_{m,t})'$ is an $(m\times 1)$ log required rate of return vector on stock at time $t$ and for an $(m\times 1)$ generic vector $o=(o_1,\dots,o_m)'$, the above exponential means $\exp\{o\}:=\big(\exp\{o_1\},\dots,\exp\{o_m\}\big)'$. It follows from equation \eqref{08109} that the log required rate of return vector at time $t$ is represented by
\begin{equation}\label{08110} 
\tilde{k}_t=\ln\big((P_t+d_t)\oslash P_{t-1}\big), ~~~t=1,2,\dots.
\end{equation}
One can write equation \eqref{08103} in the following log form
\begin{equation}\label{08111}
\ln\big(d_t\oslash d_{t-1}\big)=\tilde{d}_t-\tilde{d}_{t-1}=\tilde{g}_t,~~~t=1,2,\dots,
\end{equation}
where $\tilde{d}_t:=(\tilde{d}_{1,t},\dots,\tilde{d}_{m,t})'$ is an $(m\times 1)$ log dividend vector at time $t$ with $\tilde{d}_{i,t}:=\ln(d_{i,t})$ and $\tilde{g}_t:=(\tilde{g}_{1,t},\dots,\tilde{g}_{m,t})'$ is an $(m\times 1)$ log dividend growth rate vector at time $t$ with $\tilde{g}_{i,t}:=\ln(1+g_{m,t})$. Thus, in terms of the log dividend growth rates, log dividend at time $t+r$ is written by  
\begin{equation}\label{08112}
\tilde{d}_{t+r}=\tilde{d}_t+\sum_{j=1}^r \tilde{g}_{t+j},~~~r\geq 0.
\end{equation}
Consequently, thanks to equations \eqref{08106}, \eqref{08107}, \eqref{08110}, and \eqref{08112}, the price vectors at time $t+r$ are given by the following equation
\begin{equation}\label{08113}
P_{t+r}=\sum_{q=1}^\infty \exp\bigg\{\sum_{j=1}^r \tilde{g}_{t+j}+\sum_{j=r+1}^{r+q}\Big(\tilde{g}_{t+j}-\tilde{k}_{t+j}\Big)\bigg\}\odot d_t,
\end{equation}
and
\begin{equation}\label{08114}
P_{t+r}=\exp\bigg\{\sum_{j=1}^r\tilde{k}_{t+j}\bigg\}\odot P_t-\sum_{q=1}^r\exp\bigg\{\sum_{j=q+1}^r\tilde{k}_{t+j}+\sum_{j=1}^q\tilde{g}_{t+j}\bigg\}\odot d_t.
\end{equation}

Again, we have two equivalent equations at forecast origin $t$ for future price vector at time $t+r$. The first one depends on infinite number of the future log dividend growth rates and log required rate of returns and dividend vector at time $t$, while second one depends on finite number of the future log dividend growth rates and log required rate of returns and dividend vector and price vector at time $t$. Then, a question arise that which one is dominant in some sense? The following Proposition may answer the question.

\begin{prop}\label{prop04}
Let $\mathcal{F}$ and $\mathcal{G}$ be $\sigma$--field such that $\mathcal{F}\subset \mathcal{G}$ and $X$ is a square integrable random variable, defined on a probability space $(\Omega,\mathcal{G},\mathbb{P})$. Then, conditional on both of the $\sigma$--fields $\mathcal{F}$ and $\mathcal{G}$, the mean squared distance between the random variable $X$ and a conditional expectation $\mathbb{E}(X|\mathcal{G})$ is less than or equal to the distance between the random variable $X$ and a conditional expectation $\mathbb{E}(X|\mathcal{F})$. Also, expectations of the conditional expectations $\mathbb{E}(X|\mathcal{G})$ and $\mathbb{E}(X|\mathcal{F})$ are equal to $\mathbb{E}(X)$.
\end{prop}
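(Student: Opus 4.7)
The plan is to prove the inequality via the orthogonal decomposition of the error $X-\mathbb{E}(X\mid\mathcal{F})$, which is the classical argument that conditional expectation realizes the $L^2$ projection onto the subspace of square integrable, measurable random variables. First I would write the identity
\begin{equation*}
X-\mathbb{E}(X\mid\mathcal{F})=\bigl(X-\mathbb{E}(X\mid\mathcal{G})\bigr)+\bigl(\mathbb{E}(X\mid\mathcal{G})-\mathbb{E}(X\mid\mathcal{F})\bigr),
\end{equation*}
then square and take expectations. Since $X\in L^{2}$, both conditional expectations also lie in $L^{2}$ (conditional expectation is an $L^{2}$ contraction), so every term on the right hand side is integrable and no measure-theoretic subtlety arises.

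The heart of the argument is showing that the cross term vanishes. Because $\mathcal{F}\subset\mathcal{G}$, the increment $\mathbb{E}(X\mid\mathcal{G})-\mathbb{E}(X\mid\mathcal{F})$ is $\mathcal{G}$-measurable, so I would pull it outside the inner conditional expectation and use the defining property $\mathbb{E}\bigl[X-\mathbb{E}(X\mid\mathcal{G})\,\big|\,\mathcal{G}\bigr]=0$ together with the tower rule to obtain
\begin{equation*}
\mathbb{E}\Bigl[\bigl(X-\mathbb{E}(X\mid\mathcal{G})\bigr)\bigl(\mathbb{E}(X\mid\mathcal{G})-\mathbb{E}(X\mid\mathcal{F})\bigr)\Bigr]=0.
\end{equation*}
This delivers the Pythagorean identity
\begin{equation*}
\mathbb{E}\bigl[(X-\mathbb{E}(X\mid\mathcal{F}))^{2}\bigr]=\mathbb{E}\bigl[(X-\mathbb{E}(X\mid\mathcal{G}))^{2}\bigr]+\mathbb{E}\bigl[(\mathbb{E}(X\mid\mathcal{G})-\mathbb{E}(X\mid\mathcal{F}))^{2}\bigr],
\end{equation*}
from which the claimed inequality is immediate because the last summand is nonnegative.

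For the second assertion, I would simply invoke the tower property twice: $\mathbb{E}\bigl[\mathbb{E}(X\mid\mathcal{G})\bigr]=\mathbb{E}(X)$ and $\mathbb{E}\bigl[\mathbb{E}(X\mid\mathcal{F})\bigr]=\mathbb{E}(X)$, since $\mathcal{F}$ and $\mathcal{G}$ are sub-$\sigma$-fields of the underlying probability space. No obstacle of substance is expected; the only delicate bookkeeping point is carefully citing the measurability of $\mathbb{E}(X\mid\mathcal{G})-\mathbb{E}(X\mid\mathcal{F})$ with respect to $\mathcal{G}$ to justify discarding the cross term, and noting that square integrability of $X$ is what guarantees every expression in sight is finite.
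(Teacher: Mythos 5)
Your argument is the classical $L^2$ orthogonal decomposition, and in substance it is the same computation the paper performs: the paper expands $(X-\mathbb{E}(X|\mathcal{G}))^2$ around $\mathbb{E}(X|\mathcal{F})$ and handles the cross term by $\mathcal{G}$--measurability, which is exactly your Pythagorean identity read in the other direction. The one mismatch is that you take \emph{unconditional} expectations throughout, so what you actually establish is $\mathbb{E}\big[(X-\mathbb{E}(X|\mathcal{G}))^2\big]\leq\mathbb{E}\big[(X-\mathbb{E}(X|\mathcal{F}))^2\big]$, whereas the proposition asserts the inequality conditionally on $\mathcal{G}$ and on $\mathcal{F}$ (the paper's inequalities \eqref{08117} and \eqref{08118}), which is a strictly stronger statement. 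This is not a defect of your method, only of its bookkeeping: your cross term already vanishes under $\mathbb{E}(\,\cdot\mid\mathcal{G})$, because $\mathbb{E}(X\mid\mathcal{G})-\mathbb{E}(X\mid\mathcal{F})$ is $\mathcal{G}$--measurable and $\mathbb{E}\big(X-\mathbb{E}(X\mid\mathcal{G})\mid\mathcal{G}\big)=0$, so running the identical computation under the conditional expectation given $\mathcal{G}$ yields the $\mathcal{G}$--conditional Pythagorean identity, and applying $\mathbb{E}(\,\cdot\mid\mathcal{F})$ together with the tower property ($\mathcal{F}\subset\mathcal{G}$) then delivers the $\mathcal{F}$--conditional version. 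You should make that conditioning explicit to match the claim. Your treatment of the second assertion via the tower property coincides with the paper's.
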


\begin{proof}
\begin{eqnarray}\label{08115}
\text{Var}\big(X\big|\mathcal{G}\big)&=&\mathbb{E}\big((X-\mathbb{E}(X|\mathcal{G}))^2\big|\mathcal{G}\big)=\mathbb{E}\big((X-\mathbb{E}(X|\mathcal{F}))^2\big|\mathcal{G}\big)\\
&-&2\mathbb{E}\big((X-\mathbb{E}(X|\mathcal{F}))(\mathbb{E}(X|\mathcal{G})-\mathbb{E}(X|\mathcal{F}))\big|\mathcal{G}\big)+\mathbb{E}\big((\mathbb{E}(X|\mathcal{F})-\mathbb{E}(X|\mathcal{G}))^2\big|\mathcal{G}\big)\nonumber
\end{eqnarray}
Since $\mathbb{E}(X|\mathcal{G})-\mathbb{E}(X|\mathcal{F})$ is measurable with respect to the $\sigma$--field $\mathcal{G}_t$, we have
\begin{equation}\label{08116}
\text{Var}\big(X\big|\mathcal{G}\big)=\mathbb{E}\big((X-\mathbb{E}(X|\mathcal{F}))^2\big|\mathcal{G}\big)-(\mathbb{E}(X|\mathcal{G})-\mathbb{E}(X|\mathcal{F}))^2.
\end{equation}
Consequently,
\begin{equation}\label{08117}
\mathbb{E}\big((X-\mathbb{E}(X|\mathcal{G}))^2\big|\mathcal{G}\big)\leq \mathbb{E}\big((X-\mathbb{E}(X|\mathcal{F}))^2\big|\mathcal{G}\big).
\end{equation}
If we take conditional expectation from the above inequality with respect to the $\sigma$--field $\mathcal{F}$, one gets
\begin{equation}\label{08118}
\mathbb{E}\big((X-\mathbb{E}(X|\mathcal{G}))^2\big|\mathcal{F}\big)\leq \mathbb{E}\big((X-\mathbb{E}(X|\mathcal{F}))^2\big|\mathcal{F}\big).
\end{equation}
Thus, conditional on both of the $\sigma$--fields $\mathcal{F}$ and $\mathcal{G}$, the mean squared distance between the random variable $X$ and a conditional expectation $\mathbb{E}(X|\mathcal{G})$ is less than or equal the distance between the random variable $X$ and a conditional expectation $\mathbb{E}(X|\mathcal{F})$. Finally, $\mathbb{E}(X)=\mathbb{E}\big(\mathbb{E}(X|\mathcal{G})\big)=\mathbb{E}\big(\mathbb{E}(X|\mathcal{F})\big)$. That completes the proof of the Proposition.
\end{proof}

The Proposition tells us that the two forecasts $\mathbb{E}(X|\mathcal{G})$ and $\mathbb{E}(X|\mathcal{F})$ have the same mean, but the forecast $\mathbb{E}(X|\mathcal{G})$ is closer to the random variable $X$ than the forecast $\mathbb{E}(X|\mathcal{F})$ in meaning of mean squared distance. As a result, because for public companies, their prices are observed, to price future theoretical price (forecast) one may use equations \eqref{08106} and \eqref{08113}, that is, the best theoretical price at time $t+r$, which starts at the forecast origin $t$ is given by
\begin{eqnarray}\label{08119}
&&\mathbb{E}[P_{t+r}|\mathcal{G}_t]\nonumber\\
&&=\mathbb{E}\bigg[\prod_{j=1}^r\big(i_m+k_{t+j}\big)\bigg|\mathcal{G}_t\bigg]\odot P_t-\sum_{q=1}^r\mathbb{E}\bigg[\bigg(\prod_{j=q+1}^r\big(i_m+k_{t+j}\big)\odot \prod_{j=1}^q\big(i_m+g_{t+j}\big)\bigg)\bigg|\mathcal{G}_t\bigg]\odot d_t\nonumber\\
&&=\mathbb{E}\bigg[\exp\bigg\{\sum_{j=1}^r\tilde{k}_{t+j}\bigg\}\bigg|\mathcal{G}_t\bigg]\odot P_t-\sum_{q=1}^r\mathbb{E}\bigg[\exp\bigg\{\sum_{j=q+1}^r\tilde{k}_{t+j}+\sum_{j=1}^q\tilde{g}_{t+j}\bigg\}\bigg|\mathcal{G}_t\bigg]\odot d_t,
\end{eqnarray}
where $\mathcal{G}_t$ is an information at time $t$, which includes dividend at time $t$ and price at time $t$. However, equation \eqref{08106} and \eqref{08113} still can be used to determine theoretical price at time $t$ of the companies, that is,
\begin{eqnarray}\label{08120}
\mathbb{E}[P_t|\mathcal{F}_t]&=&\sum_{q=1}^\infty \mathbb{E}\bigg[\bigg(\prod_{j=1}^q\big(i_m+g_{t+j}\big)\oslash\big(i_m+k_{t+j}\big)\bigg)\bigg|\mathcal{F}_t\bigg]\odot d_t\nonumber\\
&=&\sum_{q=1}^\infty \mathbb{E}\bigg[\exp\bigg\{\sum_{j=1}^q\Big(\tilde{g}_{t+j}-\tilde{k}_{t+j}\Big)\bigg\}\bigg|\mathcal{F}_t\bigg]\odot d_t,
\end{eqnarray}
where we use the monotone convergence theorem and $\mathcal{F}_t$ is an information at time $t$, which includes dividend at time $t$ and such that $\mathcal{F}_t\subset \mathcal{G}_t$.

\section{Simple VAR$(p)$ process}

We assume that the log required rate of return vector at time $t$, $\tilde{k}_t$ and the log dividend growth rate vector at time $t$, $\tilde{d}_t$ place first $m$ and next $m$ components of the Bayesian MS--VAR($p$) process with order $p$ and dimension $(2m+\ell)$, respectively. Let us denote the dimension of the Bayesian MS--VAR$(p)$ process by $n$, i.e., $n:=2m+\ell$. As the log required rate of returns and the log dividend growth rates depend on macroeconomic variables and firm--specific variables, such as GDP, inflation, key financial ratios of the companies, and so on, the last $\ell$ components of the VAR$(p)$ process $y_t$ correspond to the economic variables that affect the log required rate of returns and the log dividend growth rates of the companies. By applying the Monte--Carlo methods and equations \eqref{08106}, \eqref{08107}, \eqref{08113}, and \eqref{08114}, one obtains theoretical prices and distributions of the companies. Henceforth, we consider the simple VAR process.

The simple VAR($p$) process $y_t$ is given by the following equation
\begin{equation}\label{08121}
y_t=\nu+A_1y_{t-1}+\dots+A_py_{t-p}+\xi_t,
\end{equation}
where $y_t=(y_{1,t},\dots,y_{n,t})'$ is an $(n\times 1)$ vector of endogenous variables, $\nu=(\nu_1,\dots,\nu_n)'$ is a $(n\times 1)$ intercept vector, $\xi_t=(\xi_{1,t},\dots,\xi_{n,t})'$ is an $(n\times 1)$ Gaussian white noise process with a positive definite covariance matrix $\Sigma$, and for $i=1,\dots,p$, $A_i$ are $(n\times n)$ coefficient matrices. We assume that the VAR$(p)$ process is stable, which means the modulus of all eigenvalues of the matrix $A^*$ is less than one, see \citeA{Lutkepohl05}. To obtain a distribution of the VAR$(p)$ process $y_t$, let us write the VAR$(p)$ process in VAR(1) form
\begin{equation}\label{08122}
y_t^*=\nu^*+A^*y_{t-1}^*+\xi_t^*
\end{equation} 
where $y_t^*:=(y_t',\dots,y_{t-p+1}')'$ is an $(n p\times 1)$ process, $\nu^*:=(\nu',0,\dots,0)'$ is an $(n p\times 1)$ intercept vector, $\xi_t^*:=(\xi_t',0,\dots,0)'$ is a $(np \times 1)$ white noise process, and
\begin{equation}\label{08123}
A^*:=\begin{bmatrix}
A_1 & \dots & A_{p-1} & A_p\\
I_{n} & \dots & 0 & 0\\
\vdots & \ddots & \vdots & \vdots\\
0 & \dots & I_{n} & 0
\end{bmatrix}
\end{equation}
is an $(np\times np)$ matrix. By repeating equation \eqref{08122}, one finds that for $j>0$, 
\begin{equation}\label{08124}
y_{t+j}^*=\sum_{q=1}^jA^{j-q}\nu^*+(A^*)^jy_t^*+\sum_{q=1}^jA^{j-q}\xi_{t+q}^*.
\end{equation}
Let us define an $(n\times np)$ matrix $J:=[I_n:0:\dots:0]$. The matrix can be used to extract the VAR$(p)$ process $y_t$ from the VAR(1) process $y_t^*$, i.e., $y_t=Jy_t^*$. Then, since $J'J\nu^*=\nu$ and $J'J\xi_{t+i}^*=\xi_{t+i}$, we have that for $j>0$,
\begin{equation}\label{08125}
y_{t+j}=\sum_{q=1}^j\Phi_{j-q}\nu+J(A^*)^jy_t^*+\sum_{q=1}^j\Phi_{j-q}\xi_{t+q},
\end{equation}
where $\Phi_q=J(A^*)^qJ'$. 

The matrix $\Phi_q$ can be used to the impulse response analysis. From MA$(\infty)$ representation of the VAR($p$) process, it can be shown that
\begin{equation}\label{08126}
\frac{\partial y_{t+q}}{\partial \xi_t'}=\Phi_q.
\end{equation}
The matrix $\Phi_q$ has the following explanation that a $(i,j)$--th element of the matrix $\Phi_q$, identifies the consequences of a one--unit increase in the $j$--th variable's innovation at date $t$ for the value of the $i$--th variable at time $t+q$, holding all other innovations at all dates constant, see \citeA{Hamilton94} and \citeA{Lutkepohl05}. Let $e_i$ be an $(m\times 1)$ unit vector, whose $i$--th element is 1 and others are zero, $J_k:=[I_m:0:0]$ be an $(m\times n)$ matrix, whose first block is $I_m$ and others are zero and $J_g:=[0:I_m:0]$ be an $(m\times n)$ matrix, whose second block is $I_m$ and others are zero, where $I_m$ is an $(m\times m)$ identity matrix. The matrices can be used to extract the log required rate of return vector and the log dividend growth rate vector from the VAR$(p)$ process, i.e., $\tilde{k}_t=J_ky_t$ and $\tilde{g}_t=J_gy_t$. Then, it follows from equations \eqref{08114} and \eqref{08126} that
\begin{eqnarray}\label{08127}
\frac{\partial P_{t+r}}{\partial \xi_t'}=[a_1':\dots:a_m']',
\end{eqnarray}
where for each $i=1,\dots,m$, the $(1\times n)$ vector $a_i$ is given by
\begin{eqnarray}\label{08127}
a_i&=&\exp\bigg\{\sum_{j=1}^re_i'\tilde{k}_{t+j}\bigg\}P_{i,t}\bigg(e_i'J_k\sum_{j=1}^r\Phi_j\bigg)\\
&-&\sum_{q=1}^r\exp\bigg\{\sum_{j=q+1}^re_i'\tilde{k}_{t+j}+\sum_{j=1}^qe_i'\tilde{g}_{t+j}\bigg\}d_{i,t}\bigg(e_i'J_k\sum_{j=q+1}^r\Phi_j+e_i'J_g\sum_{j=1}^q\Phi_j\bigg).\nonumber
\end{eqnarray}
It follows from equation \eqref{08125} that conditional on the information $\mathcal{F}_t$, expectation and covariance of the process $y_{t+j}$ are given by
\begin{equation}\label{08128}
\mathbb{E}\big(y_{t+j}\big|\mathcal{F}_t\big)=\sum_{q=1}^j\Phi_{j-q}\nu+J(A^*)^jy_t^*,~~~j>0
\end{equation}
and
\begin{equation}\label{08129}
\text{Cov}\big(y_{t+j_1},y_{t+j_2}\big|\mathcal{F}_t\big)=\sum_{q=1}^{j_1\wedge j_2}\Phi_{j_1-q}\Sigma\Phi_{j_2-q}',~~~j_1,j_2>0.
\end{equation}
where the information equals $\mathcal{F}_t:=\sigma(\tilde{d}_0,y_0,\dots,y_t)$. Consequently, as $\xi_t\sim\mathcal{N}(0,\Sigma)$, by equation \eqref{08120} theoretical price at time $t$ of $i$--th company equals
\begin{equation}\label{08130}
\mathbb{E}\big(e_i'P_t\big|\mathcal{F}_t\big)=\sum_{q=1}^\infty \exp\bigg\{e_i'\tilde{d}_t+\mathbb{E}\big(e_i'z_q\big|\mathcal{F}_t\big)+\frac{1}{2}\mathcal{D}\big[\text{Var}\big(e_i'z_q\big|\mathcal{F}_t\big)\big]\bigg\},
\end{equation}
where $z_q:=J_{g,k}\sum_{j=1}^qy_{t+j}$ is $(m\times 1)$ vector, $J_{g,k}:=J_g-J_k$ is an $(m\times m)$ difference matrix and for a generic $(m\times m)$ matrix $O$, $\mathcal{D}[O]$ is a $(m\times 1)$ vector, which consists of diagonal elements of the matrix $O$. On the other hand, by monotone convergence theorem, a mixed moment of stock prices at time $t$ of $i_1$--th and $i_2$--th companies is obtained by
\begin{eqnarray}\label{08131}
\mathbb{E}\big(e_{i_1}'P_tP_t'e_{i_2}\big|\mathcal{F}_t\big)&=&\sum_{q_1=1}^\infty \sum_{q_2=1}^\infty\exp\bigg\{e_{i_1}'\tilde{d}_t+e_{i_2}'\tilde{d}_t\\
&+&\mathbb{E}\big(e_{i_1}'z_{q_1}+e_{i_2}'z_{q_2}\big|\mathcal{F}_t\big)+\frac{1}{2}\mathcal{D}\big[\text{Var}\big(e_{i_1}'z_{q_1}+e_{i_2}'z_{q_2}\big|\mathcal{F}_t\big)\big]\bigg\},\nonumber
\end{eqnarray}
In the following Proposition, we give sufficient conditions of convergences of series \eqref{08130} and \eqref{08131}.

\begin{prop}\label{prop05}
Let all eigenvalues of the matrix $A^*$ be different and their modulus are less than one ($y_t$ is stable). Then, we have that
\begin{itemize}
\item[(i)] if following inequality holds, then series \eqref{08130} is convergent
\begin{equation}\label{08132}
e_i'J_{g,k}\mu+e_i'J_{g,k}\bigg(\frac{1}{2}\Gamma(0)+\Gamma\bigg)J_{g,k}'e_i<0
\end{equation}
\item[(ii)] and if following inequality holds, then series \eqref{08131} is convergent
\begin{eqnarray}\label{08133}
&&\max\Big\{e_{i_1}'J_{g,k}\mu+e_{i_1}'J_{g,k}\big(\Gamma(0)+2\Gamma\big)J_{g,k}'e_{i_1},\\
&&e_{i_2}'J_{g,k}\mu+e_{i_2}'J_{g,k}\big(\Gamma(0)+2\Gamma\big)J_{g,k}'e_{i_2}\Big\}<0,\nonumber
\end{eqnarray}
\end{itemize}
where $\mu:=\big[I-A_1-\dots-A_p\big]^{-1}\nu$ is a mean and $\Gamma(0):=\sum_{i=0}^\infty \Phi_i\Sigma\Phi_i'$ is a covariance matrix of the process $y_t$, respectively, and
\begin{equation}\label{08134}
\Gamma:=\sum_{j_1=1}^{\infty}\sum_{j_2=j_1}^{\infty}\Phi_{j_2}\Sigma\Phi_{j_1-1}'.
\end{equation}
\end{prop}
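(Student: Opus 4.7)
The plan is to show that, for each fixed $i$, the logarithm of the $i$--th component of the $q$--th summand of \eqref{08130} grows linearly in $q$ with slope equal to the left--hand side of \eqref{08132}, and analogously for the $(q_1,q_2)$--th summand of \eqref{08131}. Once this is established, conditions \eqref{08132} and \eqref{08133} guarantee geometric decay of the summands, so the series are dominated by (products of) convergent geometric series.

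The core analytical step is a pair of asymptotic expansions for $\mathbb{E}(e_i'z_q|\mathcal{F}_t)$ and $\text{Var}(e_i'z_q|\mathcal{F}_t)$. Writing $\Psi_r:=\sum_{u=0}^r\Phi_u$ and summing \eqref{08125} over $j=1,\dots,q$ yields
\begin{equation*}
\sum_{j=1}^q y_{t+j}=\sum_{s=1}^q\Psi_{q-s}\,\xi_{t+s}+\sum_{j=1}^q\bigl(J(A^*)^jy_t^*+\Psi_{j-1}\nu\bigr).
\end{equation*}
Stability forces $(A^*)^j\to 0$ and $\Psi_r\to\Psi_\infty:=(I-A_1-\dots-A_p)^{-1}$ geometrically (the distinct--eigenvalue hypothesis makes these rates explicit through the diagonalization of $A^*$), while $\Psi_\infty\nu=\mu$. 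A Ces\`aro--type argument then produces
\begin{equation*}
\mathbb{E}(z_q|\mathcal{F}_t)=q\,J_{g,k}\mu+O(1),\qquad \text{Var}(e_i'z_q|\mathcal{F}_t)=q\,e_i'J_{g,k}\Psi_\infty\Sigma\Psi_\infty'J_{g,k}'e_i+O(1).
\end{equation*}
To match the coefficient appearing in \eqref{08132}, I would unfold $\Psi_\infty\Sigma\Psi_\infty'=\sum_{i_1,i_2\geq 0}\Phi_{i_1}\Sigma\Phi_{i_2}'$, split this double sum according to $i_1=i_2$, $i_1<i_2$ and $i_1>i_2$, and identify the three pieces with $\Gamma(0)$, $\Gamma'$ and $\Gamma$ via the reindexing $h=|i_2-i_1|$ in \eqref{08134}; the identity $e_i'J_{g,k}\Gamma J_{g,k}'e_i=e_i'J_{g,k}\Gamma'J_{g,k}'e_i$ then collapses $\Gamma(0)+\Gamma+\Gamma'$ to $\Gamma(0)+2\Gamma$ inside the scalar quadratic form.

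For part (i), combining the expansions above with $\tfrac12\Gamma(0)+\Gamma=\tfrac12(\Gamma(0)+2\Gamma)$ gives
\begin{equation*}
e_i'\tilde{d}_t+\mathbb{E}(e_i'z_q|\mathcal{F}_t)+\tfrac12\text{Var}(e_i'z_q|\mathcal{F}_t)=q\Bigl(e_i'J_{g,k}\mu+e_i'J_{g,k}\bigl(\tfrac12\Gamma(0)+\Gamma\bigr)J_{g,k}'e_i\Bigr)+O(1),
\end{equation*}
so under \eqref{08132} the $q$--th summand of \eqref{08130} decays geometrically and the series converges, e.g.\ by the root test.

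For part (ii), I would apply the elementary inequality $\text{Var}(A+B)\le 2\text{Var}(A)+2\text{Var}(B)$ with $A=e_{i_1}'z_{q_1}$ and $B=e_{i_2}'z_{q_2}$. This bounds the exponent of the $(q_1,q_2)$--th summand of \eqref{08131} by $q_1C_1+q_2C_2+O(1)$, where $C_j=e_{i_j}'J_{g,k}\mu+e_{i_j}'J_{g,k}(\Gamma(0)+2\Gamma)J_{g,k}'e_{i_j}$; the factor $2$ from the variance bound precisely cancels the $\tfrac12$ in front of the variance, which explains why \eqref{08133} involves $\Gamma(0)+2\Gamma$ rather than $\tfrac12\Gamma(0)+\Gamma$. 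Under \eqref{08133} both $C_j<0$, so the double series is dominated by the product of two convergent geometric series. The main obstacle in this plan is the variance asymptotic: one must verify that the truncation error $\text{Var}(\sum_{j=1}^q y_{t+j}|\mathcal{F}_t)-q\Psi_\infty\Sigma\Psi_\infty'$ is uniformly $O(1)$ in $q$ and in the initial data $y_t^*$, and this is precisely where the distinct--eigenvalue hypothesis is genuinely used, through the diagonalization of $A^*$, to furnish the explicit geometric rates needed to control the remainder.
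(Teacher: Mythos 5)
Your proposal is correct and arrives at the stated conditions, but it packages the argument differently from the paper in two places, so a brief comparison is worthwhile. For part (i) the paper runs a \emph{ratio test}: it computes $\log(\hat{s}_{i,q+1}/\hat{s}_{i,q})$, which splits into $e_i'J_{g,k}\mathbb{E}(y_{t+q+1}|\mathcal{F}_t)$, the variance term $\tfrac12\mathcal{D}[\mathrm{Var}(J_{g,k}y_{t+q+1}|\mathcal{F}_t)]$, and the cross term $\mathcal{D}[\mathrm{Cov}(J_{g,k}y_{t+q+1},J_{g,k}\sum_{j=1}^q y_{t+j}|\mathcal{F}_t)]$, and shows these converge to $J_{g,k}\mu$, $\tfrac12\Gamma(0)$ and $\Gamma$ respectively, the last via the diagonalization $(A^*)^j=C\Lambda^jC^{-1}$ and the explicit limit of $\sum_{j_1\le j_2\le q}\lambda_{\alpha}^{j_2}\lambda_{\beta}^{j_1-1}$. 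You instead derive the linear-in-$q$ asymptotic of the log-summand itself and use the root test; the coefficient $\tfrac12\Gamma(0)+\Gamma$ then emerges from the triangular decomposition $\Psi_\infty\Sigma\Psi_\infty'=\Gamma(0)+\Gamma+\Gamma'$ with $\Psi_\infty:=\sum_{u=0}^\infty\Phi_u=(I-A_1-\dots-A_p)^{-1}$, which is a clean and arguably more transparent way to see why $\Gamma$ enters with weight one against $\tfrac12\Gamma(0)$ (indeed \eqref{08134} is exactly the strictly triangular part $\sum_{a>b\ge 0}\Phi_a\Sigma\Phi_b'$). The two routes compute the same limits, and your worry about the remainder is not an obstacle: geometric decay of $\Phi_u$ and $(A^*)^j$ already follows from stability (uniformity in $y_t^*$ is irrelevant since it is fixed by conditioning on $\mathcal{F}_t$), so the distinct-eigenvalue hypothesis serves in either proof only to make the rates explicit. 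For part (ii) the paper bounds the mixed moment by the Cauchy--Schwarz inequality $\mathbb{E}(e_{i_1}'P_tP_t'e_{i_2}|\mathcal{F}_t)\le\sqrt{\mathbb{E}((e_{i_1}'P_t)^2|\mathcal{F}_t)\,\mathbb{E}((e_{i_2}'P_t)^2|\mathcal{F}_t)}$ and reapplies the argument of (i) to the two resulting single series (whose exponents carry the variance with weight $1$ rather than $\tfrac12$), whereas you bound the exponent of each $(q_1,q_2)$--summand via $\tfrac12\mathrm{Var}(A+B)\le\mathrm{Var}(A)+\mathrm{Var}(B)$ and dominate the double series by a product of geometric series directly; both yield precisely \eqref{08133}, and your term-by-term version has the small advantage of not needing to pass through the random variables $P_t$ themselves.
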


\begin{proof}
($i$) Let us consider $q$--th term of the series \eqref{08130}, namely,
\begin{equation}\label{08135}
\hat{s}_{i,q}:=e_i'\exp\bigg\{\tilde{d}_t+J_{g,k}\sum_{j=1}^q\mathbb{E}\big(y_{t+j}\big|\mathcal{F}_t\big)+\frac{1}{2}\mathcal{D}\bigg[\text{Var}\bigg(J_{g,k}\sum_{j=1}^qy_{t+j}\bigg|\mathcal{F}_t\bigg)\bigg]\bigg\}.
\end{equation}
Then, according to the ratio test of a series, a ratio of successive terms of the series equals
\begin{eqnarray}\label{08136}
\frac{\hat{s}_{i,q+1}}{\hat{s}_{i,q}}&=&\exp\bigg\{e_i'J_{g,k}\mathbb{E}\big(y_{t+q+1}\big|\mathcal{F}_t\big)+\frac{1}{2}e_i'\mathcal{D}\big[\text{Var}\big(J_{g,k}y_{t+q+1}\big|\mathcal{F}_t\big)\big]\\
&+&e_i'\mathcal{D}\bigg[\text{Cov}\bigg(J_{g,k}y_{t+q+1},J_{g,k}\sum_{j=1}^qy_{t+j}\bigg|\mathcal{F}_t\bigg)\bigg]\bigg\}.\nonumber
\end{eqnarray}
By equations \eqref{08128} and \eqref{08129} and the fact that $y_t$ is a stable process, for the first line of the above equation, it holds
\begin{equation}\label{08137}
\lim_{q\to\infty}\mathbb{E}\big(y_{t+q+1}\big|\mathcal{F}_t\big)=\mu,
\end{equation}
and
\begin{equation}\label{08138}
\lim_{q\to\infty}\text{Var}\big(y_{t+q+1}\big|\mathcal{F}_t\big)=\sum_{q=0}^\infty \Phi_q\Sigma\Phi_q'=\Gamma(0).
\end{equation}
For the second line of equation \eqref{08136}, due to equation \eqref{08129}, we have
\begin{equation}\label{08139}
\text{Cov}\bigg(y_{t+q+1},\sum_{j=1}^qy_{t+j}\bigg|\mathcal{F}_t\bigg)=\sum_{j_1=1}^q\sum_{j_2=j_1}^q\Phi_{j_2}\Sigma\Phi_{j_1-1}'.
\end{equation}
Since all eigenvalues are different, the matrix $(A^*)^j$ can be represented by $(A^*)^j=C\Lambda^jC^{-1}$, where the matrix $C$ consists of eigenvectors of the matrix $A^*$ and $\Lambda$ is a diagonal matrix, whose diagonal elements consist of the eigenvalues of the matrix $A^*$. Consequently, since for generic vectors $o_1,o_2\in \mathbb{R}^n$ and matrix $O\in\mathbb{R}^{n\times n}$, $\text{diag}\{o_1\}O\text{diag}\{o_2\}=O\odot(o_1o_2')$ and $\Phi_j=J(A^*)^jJ'$, it holds
\begin{eqnarray}\label{08140}
\sum_{j_1=1}^q\sum_{j_2=j_1}^q\Phi_{j_2}\Sigma\Phi_{j_1-1}'=\sum_{j_1=1}^q\sum_{j_2=j_1}^qJC\Big(C^{-1}J'\Sigma J(C')^{-1}\odot\big(d_1d_2'\big)\Big)C'J',
\end{eqnarray}
where $d_1:=\mathcal{D}\big[\Lambda^{j_2}\big]$ and $d_2:=\mathcal{D}\big[\Lambda^{j_1-1}\big]$. Therefore, because VAR($p$) process is stable, a generic $(\alpha,\beta)$--th element of the matrix $\sum_{j_1=1}^q\sum_{j_2=j_1}^qd_1d_2'$ converges to
\begin{eqnarray}\label{08141}
\lim_{q\to\infty}\bigg[\sum_{j_1=1}^q\sum_{j_2=j_1}^q\lambda_{\alpha}^{j_2}\lambda_{\beta}^{j_1-1}\bigg]=\frac{\lambda_{\alpha}}{(1-\lambda_{\alpha})(1-\lambda_{\alpha}\lambda_{\beta})}.
\end{eqnarray}
Therefore, the matrix given in equation \eqref{08140} has a finite limit and we denote it by $\Gamma$. As a result, by the ratio test of a series, if condition \eqref{08132} holds, the series \eqref{08130} is convergent.

($ii$) According to the Cauchy--Schwarz inequality, we get that
\begin{eqnarray}\label{08142}
\mathbb{E}\big(e_{i_1}'P_tP_t'e_{i_2}\big|\mathcal{F}_t\big)&\leq&\sqrt{\mathbb{E}\big(e_{i_1}'P_tP_t'e_{i_1}\big|\mathcal{F}_t\big)\mathbb{E}\big(e_{i_2}'P_tP_t'e_{i_2}\big|\mathcal{F}_t\big)}\\
&=&\sqrt{\mathbb{E}\big((e_{i_1}'P_t)^2\big|\mathcal{F}_t\big)\mathbb{E}\big((e_{i_2}'P_t)^2\big|\mathcal{F}_t\big)},
\end{eqnarray}
where for $k=1,2$,
\begin{equation}\label{08143}
\mathbb{E}\big((e_{i_k}'P_t)^2\big|\mathcal{F}_t\big):=\sum_{q_k=1}^\infty \exp\bigg\{2\Big(e_{i_k}'\tilde{d}_t+\mathbb{E}\big(e_{i_k}'z_{q_k}\big|\mathcal{F}_t\big)+\mathcal{D}\big[\text{Var}\big(e_{i_k}'z_{q_k}\big|\mathcal{F}_t\big)\big]\Big)\bigg\}.
\end{equation}
As a result, by repeating the above idea, one obtains equation \eqref{08133}. That completes the proof.
\end{proof}

It is worth mentioning that if VAR$(p)$ process, given by equation \eqref{08121} is cointegrated, then by using the Granger representation theorem (see \citeA{Hansen05}), it can be shown that the series \eqref{08130} is diverging, that is, the theoretical stock price of $i$--th company does not exist. Consequently, higher order moments of stock prices also do not exist.

Finally, to use the Gordon growth model, based on the VAR process, one needs parameter estimation. Parameter estimation method can found in \citeA{Hamilton94} and \citeA{Lutkepohl05}. One may be extend the results in this section to Bayesian framework. In this case, one needs Monte Carlo simulation methods. Recent new Monte Carlo simulation method can be found in \citeA{Battulga24g}.

\section{Conclusion}

In this paper, we study Gordon growth model, based on the VAR process, and provide two Propositions. The first one deals with two representations of stock price, which arise in generic Gordon growth model and is related to stock forecast. The other one is dedicated to the existence conditions of stock price and its second order moment for Gordon growth model, based on a simple VAR process. Also, we obtain a result that if one uses the error correction model (cointegrated) for Gordon growth model, the stock price does not exist.

\bibliographystyle{apacite}
\bibliography{References}

\begin{thebibliography}{}

\bibitem [\protect \citeauthoryear {%
Battulga%
}{%
Battulga%
}{%
{\protect \APACyear {2022}}%
}]{%
Battulga22b}
\APACinsertmetastar {%
Battulga22b}%
\begin{APACrefauthors}%
Battulga, G.%
\end{APACrefauthors}%
\unskip\
\newblock
\APACrefYearMonthDay{2022}{}{}.
\newblock
{\BBOQ}\APACrefatitle {Stochastic DDM with regime--switching process} {Stochastic ddm with regime--switching process}.{\BBCQ}
\newblock
\APACjournalVolNumPages{Numerical Algebra, Control and Optimization}{}{}{1--27}.
\newblock
\begin{APACrefDOI} \doi{10.3934/naco.2022031} \end{APACrefDOI}
\PrintBackRefs{\CurrentBib}

\bibitem [\protect \citeauthoryear {%
Battulga%
}{%
Battulga%
}{%
{\protect \APACyear {2023}}%
}]{%
Battulga23b}
\APACinsertmetastar {%
Battulga23b}%
\begin{APACrefauthors}%
Battulga, G.%
\end{APACrefauthors}%
\unskip\
\newblock
\APACrefYearMonthDay{2023}{}{}.
\newblock
{\BBOQ}\APACrefatitle {{Parameter Estimation Methods of Required Rate of Return on Stock}} {{Parameter Estimation Methods of Required Rate of Return on Stock}}.{\BBCQ}
\newblock
\APACjournalVolNumPages{International Journal of Theoretical and Applied Finance}{26}{8}{2450005}.
\PrintBackRefs{\CurrentBib}

\bibitem [\protect \citeauthoryear {%
Battulga%
}{%
Battulga%
}{%
{\protect \APACyear {2024}}%
}]{%
Battulga24g}
\APACinsertmetastar {%
Battulga24g}%
\begin{APACrefauthors}%
Battulga, G.%
\end{APACrefauthors}%
\unskip\
\newblock
\APACrefYearMonthDay{2024}{}{}.
\newblock
{\BBOQ}\APACrefatitle {{Bayesian Markov-Switching Vector Autoregressive Process}} {{Bayesian Markov-Switching Vector Autoregressive Process}}.{\BBCQ}
\newblock
\APAChowpublished {Available at: \url{https://arxiv.org/abs/2404.11235}}.
\PrintBackRefs{\CurrentBib}

\bibitem [\protect \citeauthoryear {%
Battulga%
, Jacob%
, Altangerel%
\BCBL {}\ \BBA {} Horsch%
}{%
Battulga%
\ \protect \BOthers {.}}{%
{\protect \APACyear {2022}}%
}]{%
Battulga22a}
\APACinsertmetastar {%
Battulga22a}%
\begin{APACrefauthors}%
Battulga, G.%
, Jacob, K.%
, Altangerel, L.%
\BCBL {}\ \BBA {} Horsch, A.%
\end{APACrefauthors}%
\unskip\
\newblock
\APACrefYearMonthDay{2022}{}{}.
\newblock
{\BBOQ}\APACrefatitle {{Dividends and Compound Poisson--Process: A new Stochastic Stock Price Model}} {{Dividends and Compound Poisson--Process: A new Stochastic Stock Price Model}}.{\BBCQ}
\newblock
\APACjournalVolNumPages{International Journal of Theoretical and Applied Finance}{25}{3}{2250014}.
\PrintBackRefs{\CurrentBib}

\bibitem [\protect \citeauthoryear {%
Brealey%
, Myers%
\BCBL {}\ \BBA {} Marcus%
}{%
Brealey%
\ \protect \BOthers {.}}{%
{\protect \APACyear {2020}}%
}]{%
Brealey20}
\APACinsertmetastar {%
Brealey20}%
\begin{APACrefauthors}%
Brealey, R.%
, Myers, S\BPBI C.%
\BCBL {}\ \BBA {} Marcus, A\BPBI J.%
\end{APACrefauthors}%
\unskip\
\newblock
\APACrefYear{2020}.
\newblock
\APACrefbtitle {{Fundamentals of Corporate Finance}} {{Fundamentals of Corporate Finance}}\ (\PrintOrdinal{10}\ \BEd).
\newblock
\APACaddressPublisher{}{McGraw Hill}.
\PrintBackRefs{\CurrentBib}

\bibitem [\protect \citeauthoryear {%
D'Amico%
\ \BBA {} De~Blasis%
}{%
D'Amico%
\ \BBA {} De~Blasis%
}{%
{\protect \APACyear {2020}}%
}]{%
dAmico20a}
\APACinsertmetastar {%
dAmico20a}%
\begin{APACrefauthors}%
D'Amico, G.%
\BCBT {}\ \BBA {} De~Blasis, R.%
\end{APACrefauthors}%
\unskip\
\newblock
\APACrefYearMonthDay{2020}{}{}.
\newblock
{\BBOQ}\APACrefatitle {{A Review of the Dividend Discount Model: from Deterministic to Stochastic Models}} {{A Review of the Dividend Discount Model: from Deterministic to Stochastic Models}}.{\BBCQ}
\newblock
\APACjournalVolNumPages{Statistical Topics and Stochastic Models for Dependent Data with Applications}{}{}{47--67}.
\PrintBackRefs{\CurrentBib}

\bibitem [\protect \citeauthoryear {%
Gordon%
\ \BBA {} Shapiro%
}{%
Gordon%
\ \BBA {} Shapiro%
}{%
{\protect \APACyear {1956}}%
}]{%
Gordon56}
\APACinsertmetastar {%
Gordon56}%
\begin{APACrefauthors}%
Gordon, M\BPBI J.%
\BCBT {}\ \BBA {} Shapiro, E.%
\end{APACrefauthors}%
\unskip\
\newblock
\APACrefYearMonthDay{1956}{}{}.
\newblock
{\BBOQ}\APACrefatitle {Capital equipment analysis: the required rate of profit} {Capital equipment analysis: the required rate of profit}.{\BBCQ}
\newblock
\APACjournalVolNumPages{Management science}{3}{1}{102--110}.
\PrintBackRefs{\CurrentBib}

\bibitem [\protect \citeauthoryear {%
Hamilton%
}{%
Hamilton%
}{%
{\protect \APACyear {1994}}%
}]{%
Hamilton94}
\APACinsertmetastar {%
Hamilton94}%
\begin{APACrefauthors}%
Hamilton, J\BPBI D.%
\end{APACrefauthors}%
\unskip\
\newblock
\APACrefYear{1994}.
\newblock
\APACrefbtitle {{Time Series Econometrics}} {{Time Series Econometrics}}.
\newblock
\APACaddressPublisher{}{Princeton University Press, Princeton}.
\PrintBackRefs{\CurrentBib}

\bibitem [\protect \citeauthoryear {%
Hansen%
}{%
Hansen%
}{%
{\protect \APACyear {2005}}%
}]{%
Hansen05}
\APACinsertmetastar {%
Hansen05}%
\begin{APACrefauthors}%
Hansen, P\BPBI R.%
\end{APACrefauthors}%
\unskip\
\newblock
\APACrefYearMonthDay{2005}{}{}.
\newblock
{\BBOQ}\APACrefatitle {Granger's representation theorem: A closed-form expression for I(1) processes} {Granger's representation theorem: A closed-form expression for i(1) processes}.{\BBCQ}
\newblock
\APACjournalVolNumPages{The Econometrics Journal}{8}{1}{23--38}.
\PrintBackRefs{\CurrentBib}

\bibitem [\protect \citeauthoryear {%
L{\"u}tkepohl%
}{%
L{\"u}tkepohl%
}{%
{\protect \APACyear {2005}}%
}]{%
Lutkepohl05}
\APACinsertmetastar {%
Lutkepohl05}%
\begin{APACrefauthors}%
L{\"u}tkepohl, H.%
\end{APACrefauthors}%
\unskip\
\newblock
\APACrefYear{2005}.
\newblock
\APACrefbtitle {{New Introduction to Multiple Time Series Analysis}} {{New Introduction to Multiple Time Series Analysis}}\ (\PrintOrdinal{2}\ \BEd).
\newblock
\APACaddressPublisher{}{Springer Berlin Heidelberg}.
\PrintBackRefs{\CurrentBib}

\bibitem [\protect \citeauthoryear {%
Williams%
}{%
Williams%
}{%
{\protect \APACyear {1938}}%
}]{%
Williams38}
\APACinsertmetastar {%
Williams38}%
\begin{APACrefauthors}%
Williams, J\BPBI B.%
\end{APACrefauthors}%
\unskip\
\newblock
\APACrefYear{1938}.
\newblock
\APACrefbtitle {{The Theory of Investment Value}} {{The Theory of Investment Value}}.
\newblock
\APACaddressPublisher{}{Harvard University Press}.
\PrintBackRefs{\CurrentBib}

\end{thebibliography}

\end{document}